\newtheorem{assumption}{Assumption}}
\newtheorem{remark}{Remark}
\newtheorem{lemma}{Lemma}
\newcommand{\tphip}{\tilde{\varphi}_p}
\newcommand{\ulambp}{\underline{\lambda}_p}
\newcommand{\tphikp}{\tilde{\varphi}_p(k-p)}
\newcommand{\taulkp}{\hat{\tau}_p(\tilde{\varphi}_p(k-p),\theta_p)}
\newcommand{\taugp}{\bar{\hat{\tau}}_p(\theta_p)}
\newcommand{\hthetap}{\hat{\theta}_p}
\newcommand{\taugph}{\bar{\hat{\tau}}_p(\hat{\theta}_p)}
\begin{document}
\date{}

\title{Data-driven filtering for linear systems using\\Set Membership multistep predictors}

\author{Marco~Lauricella  and 
	Lorenzo~Fagiano%
	\thanks{The authors are with the Dipartimento di Elettronica, Informazione e Bioingegneria, Politecnico di Milano, Piazza Leonardo da Vinci
32, 20133 Milano, Italy (e-mail: marco.lauricella@polimi.it; lorenzo.fagiano@polimi.it). \textit{Corresponding author: Marco Lauricella}.\newline This is the extended version of an accepted paper submitted to the 2020 IEEE Conference on Decision and Control.}}

\maketitle

\begin{abstract}
This paper presents a novel data-driven, direct filtering approach for unknown linear time-invariant systems affected by unknown-but-bounded measurement noise. The proposed technique combines independent multistep prediction models, identified resorting to the Set Membership framework, to refine a set that is guaranteed to contain the true system output. The filtered output is then computed as the central value in such a set. By doing so, the method achieves an accurate output filtering and provides tight and minimal error bounds with respect to the true system output. To attain these results, the online solution of linear programs is required. A modified filtering approach with lower online computational cost is also presented, obtained  by moving the solution of the optimization problems to an offline preliminary phase, at the cost of larger accuracy bounds. The performance of the proposed approaches are evaluated and compared with those of standard model-based filtering techniques in a numerical example. 
\end{abstract}

\section{Introduction}\label{s:intro}
In this paper, we address the problem of output filtering for the case of linear time-invariant systems subject to unknown-but-bounded measurement disturbances. Our goal is to obtain an accurate filtering of the output from the noise, and, at the same time, to provide tight bounds on the error between the filtered output and the true one. 
The problem of process variables filtering has been widely studied over the years, and the related literature includes a large number of contributions, see, e.g., \cite{gelb1974applied,anderson2012optimal,saberi2007filtering} and the references therein. Among them, the most famous filtering approaches stem from the seminal works of Kolmogorov-Wiener \cite{kolmogorov1941stationary,wiener1949smoothing}, and of Kalman-Bucy \cite{kalman1961new}. These works, as a vast majority of the filtering methods proposed during the years, mainly focus on the case of perfectly known system model, and stochastic disturbances with known probabilistic properties, for which they achieve optimal filtering properties. Other approaches assume that the accuracy is measured by suitable noise to error norm gains, where optimal performance are achieved by $\mathcal{H}_2$, $\ell_1$, $\mathcal{H}_{\infty}$ filters, see e.g. \cite{Geromel2001,Sun2005,Nagpal1991,Abedor1996,Voulgaris1995,Colaneri2002,Shaked1992Hinf}. A different filtering methodology is also given by Moving Horizon Estimators, see e.g. \cite{alessandri2008moving,rao2003constrained}. Most of these filtering approaches are mainly based on the assumption of exact model knowledge as well. Since this is rarely the case in real world applications, where the system model is often unknown, a two step procedure is commonly adopted, where a model of the system at hand is identified from a noise-affected set of data collected from an experiment performed on the system, which is then used to define the filter. In this way, no optimality properties can be guaranteed anymore.

Here, we resort to Set Membership (SM) identification methods to design a data-based, direct (i.e., without estimating a model of the plant) output filtering approach, able to obtain a `small' estimation error, measured by suitable guaranteed accuracy bounds. 
SM identification methods usually operate under the unknown-but-bounded disturbance and uncertainty framework, see e.g., \cite{milanese2013bounding,walter1990estimation,garulli1999robustness}, and can provide strong results in terms of optimality of the accuracy bounds in a worst-case sense, while requiring rather mild assumptions on the system and the disturbances \cite{milanese2011unified}.

In the SM literature, there are several contributions addressing the problem of data-driven filter design, e.g., \cite{milanese2010direct,yang2009set,garulli1997conditional}, where a filtered version of the variables of interest is provided, along with guaranteed bounds on their accuracy. Inspired by the results of a previous work (\cite{lauricella2020set}), where the theoretical properties of the error bounds of multistep and onestep iterated models are investigated, here, we propose a novel filtering algorithm, based on the combined use of independent multistep prediction models, that is able to achieve tight accuracy bounds. The main idea is to derive a set containing the true system output at a given time, as the intersection of the accuracy regions provided by several SM multistep predictors, each one with a different prediction horizon. Then, the wanted filtered output is computed as the center of such a set. This quantity is locally optimal, in a worst-case error sense, i.e. it attains the minimal achievable error bounds under the considered assumptions. This is possible thanks to the use of independent multistep predictors, allowing one to achieve good prediction accuracy for different steps ahead, see e.g., \cite{shook1991identification,haber2003long,farina2011simulation,lauri2010pls}, and to derive less conservative bounds on the prediction accuracy, when compared to a single prediction model iterated in simulation \cite{shook1992control,sjoberg1995nonlinear}. Moreover, independent $p$-steps ahead models also allow one to achieve the theoretical minimum $p$-steps ahead prediction error bound, as defined under the SM framework, see \cite{terzi2019learning}. By combining the output of different multistep predictors, all attaining their theoretical minimum error, we can further refine the set containing the system output, thus obtaining even tighter accuracy bounds.\\
The approach requires the online solution of linear programs (LPs), which can be afforded in many real world applications of interest, thanks to the computational power available nowadays. For  cases where the available computational power is limited, and/or the sampling time is too short for the online solution of optimization problems, we also present a second filtering approach based on the same principle, which uses global accuracy bounds computed in a preliminary offline identification phase, at the cost of higher conservativeness.

The filtering approaches proposed in this paper are finally showcased on a numerical example, where their performance are compared to those of a standard robust Kalman filter based on the Riccati equation approach.

\section{Problem statement and preliminary results}\label{s:problem_statement}

\subsection{Problem statement}\label{SS:prob_for}
Consider a discrete time, linear time invariant system, with input $u(k)\in\mathbb{R}^m$, and output $z(k)\in\mathbb{R}$, where $k\in\mathbb{Z}$ denotes the discrete time variable, modeled using an input-output auto-regressive with exogenous input (ARX) representation of order $o$ (see \cite{lauricella2020set} for details regarding the order choice)
\begin{equation}\label{eq:system_ARX_1s}
z(k+1)=\psi(k)^T\theta^0,
\end{equation}
where $\theta^0$ is the vector of the real system parameters, and the regressor $\psi(k)$ is defined as
\begin{equation*}\label{eq:system_regressor_def}
\begin{aligned}
\psi(k)=&\left[ Z_o^T(k) \; U_o^T(k) \right]^T\in \mathbb{R}^{o+mo},\\
Z_o(k)=&\left[ z(k) \; z(k-1) \; \hdots \; z(k-o+1) \right]^T\in \mathbb{R}^{o},\\
U_o(k)=&\left[u(k)^T \; u(k-1)^T \hdots\; u(k-o+1)^T \right]^T\in \mathbb{R}^{mo}.
\end{aligned}
\end{equation*}
The output measurement $y(k)\in\mathbb{R}$ is affected by an additive noise $d(k)\in\mathbb{R}$:
\begin{equation*}\label{eq:output_meas}
y(k)=z(k)+d(k).
\end{equation*}
For the sake of notational simplicity, and without loss of generality, here we consider a multiple input, single output system. The proposed approach can be easily generalized to the multiple output case, by applying it to one output at the time. \\
Let us make the following assumption on the system at hand:
\begin{assumption}\label{as:bounded_dist}
The measurement noise and the system input are bounded. In particular:
\begin{itemize}
\item $|d(k)| \leq \bar{d}_0,\; \forall k\in\mathbb{Z}, \; \bar{d}_0 \in \mathbb{R}, \; \bar{d}_0>0.$
\item $u(k)\in\mathbb{U}\subset \mathbb{R}^m, \; \forall k \in \mathbb{Z}, \; \mathbb{U} \; \text{compact}.$
\end{itemize}
\end{assumption}
\noindent Assumption \ref{as:bounded_dist} is common for system identification problems based on the framework of unknown-but-bounded disturbance, and it is valid for many practical applications as well.
Here, we consider independent $p$-steps ahead prediction models in the ARX form
\begin{equation}\label{eq:pred_formulation}
\hat{z}(k|k-p)=\varphi_p(k-p)^T\theta_p,
\end{equation}
where $\theta_p$ is the vector of the predictor parameters to be identified, and the noise-affected regressor $\varphi_p(k-p)$ is defined as
\begin{equation}\label{eq:pred_regressor_def}
\begin{aligned}
\varphi_p(k-p)=&\left[ Y_o^T(k-p) \; U_{p,o}^T(k-p) \right]^T\in \mathbb{R}^{o+m(o+p-1)},\\
Y_o(k-p)=&\left[ y(k-p) \; y(k-p-1) \; \hdots \; y(k-p-o+1) \right]^T\in \mathbb{R}^{o},\\
U_{p,o}(k-p)=&\left[u(k-1)^T \; \hdots \; u(k-p)^T \; \hdots \; u(k-p-o+1)^T \right]^T\in \mathbb{R}^{m(o+p-1)}.
\end{aligned}
\end{equation}
Notice that a $p$-steps ahead model of the system can be readily obtained by recursion of \eqref{eq:system_ARX_1s}, leading to
\begin{equation*}
z(k|k-p)=\psi_p(k-p)^T\theta_p^0,
\end{equation*}
where $\psi_p(k-p)$ is the noise-free counterpart of $\varphi_p(k-p)$, obtained substituting $Y_o(k)$ with $Z_o(k)$ in \eqref{eq:pred_regressor_def}, and the parameter values in $\theta_p^0$ are polynomial functions of the entries of $\theta^0$.\\
Let us assume that an experiment is performed on the system at hand to collect a finite number of sampled data $(\tilde{y}(k),\tilde{u}(k))$ to be used for the identification task. Here, $\tilde{\cdot}$ is used to denote a sampled and stored data point of a variable. For each prediction step $p$, these data form the following set, composed of $N$ sampled regressors and $N$ corresponding $p$-steps ahead output measurements:
\begin{equation}\label{eq:samp_dataset_arx}
\tilde{\mathscr{V}}_p^N \doteq \left\{ \tilde{v}_p(k)=\begin{bmatrix} \tphikp \\ \tilde{y}(k) \end{bmatrix}, \, k=1,\hdots,N \right\}, 
\end{equation}
where $\tilde{\mathscr{V}}_p^N \subset \mathbb{R}^{1+o+m(o+p-1)}$. For simplicity and without loss of generality, we consider that the number $N$ of data points is the same for any considered value of $p$. Moreover, as customary for identification problems based on sampled data, we consider the following assumption on the available dataset:
\begin{assumption}\label{as:info_data}
As $N\to\infty$, the sampled dataset $\tilde{\mathscr{V}}_p^N$ is such that the set of all the system trajectories of interest is densely covered, i.e., the input $u$ is persistently exciting and the disturbance $d$ is bound-exploring.
\end{assumption}

Prediction models \eqref{eq:pred_formulation} are also known as multistep predictors, as they are designed to directly provide the output prediction $p$-steps ahead, and do not require the integration of an underlying simulation model. Note that, here, we consider prediction models having the same autoregressive order of \eqref{eq:system_ARX_1s}. In case the order $o$ is not known, it can be estimated resorting to the procedure proposed in \cite{lauricella2020set}. Moreover, in the remainder of this paper, we assume that either the disturbance bound $\bar{d}$ is known, or that its estimate, obtained as described in \cite{lauricella2020set}, is available.

The problem addressed in this paper is the following: given the data set \eqref{eq:samp_dataset_arx}, derive a filtering algorithm that returns an estimate $\hat{z}(k)\approx z(k)$ of the system output, together with guaranteed bounds on the error $|\hat{z}(k)- z(k)|$.

\subsection{Preliminary results}\label{SS:preliminary results}
The error between the true system output $z(k)$ of \eqref{eq:system_ARX_1s} and the $p$-steps ahead prediction $\hat{z}(k|k-p)$, given by \eqref{eq:pred_formulation}, originates from two sources:  the difference between the real parameter values $\theta^0$ and the predictor ones, and the noise affecting the measurements used in the regressor $\tphikp$. Under Assumption \ref{as:bounded_dist}, this $p$-steps ahead error can be upper bounded by a worst-case error bound $\ulambp$, which can be estimated resorting to the SM framework, see \cite{lauricella2020set}, as
\begin{equation}\label{eq:lambda_calc}
\begin{array}{c}
\ulambp = \alpha \cdot \min\limits_{\theta_p,\lambda\in \mathbb{R}^+} \lambda \\
\text{subject to} \\
\left\vert \tilde{y}-\tphip^T \theta_p \right\vert \leq \lambda +\bar{d}, \;\; \forall \left(\tphip,\tilde{y} \right): \begin{bmatrix} \tphip \\ \tilde{y} \end{bmatrix} \in \tilde{\mathscr{V}}_p^N,
\end{array}
\end{equation}
with $\alpha>1$. For a fixed value of $p$, $\ulambp$ represents the global error bound related to all possible $p$-steps ahead predictors of the form of \eqref{eq:pred_formulation}, and it is used to define the Feasible Parameter Set (FPS):
\begin{equation}\label{eq:FPS_gen_def}
\Theta_p \doteq \bigg\{ \theta_p : \left\vert \tilde{y} - \tphip^T \theta_p \right\vert \leq \ulambp+\bar{d}, \; \forall \left(\tphip,\tilde{y} \right) : \begin{bmatrix} \tphip \\ \tilde{y} \end{bmatrix} \in \tilde{\mathscr{V}}_p^N \bigg\}.
\end{equation}
The FPS $\Theta_p$ is the set of all possible $p$-steps ahead predictor parameters that are consistent with the sampled data and the disturbance bound  $\bar{d}$.
Under Assumption \ref{as:info_data}, $\Theta_p$ is a polytope having at most $2N$ facets, represented in \eqref{eq:FPS_gen_def} with an inequality description. If $\Theta_p$ happens to be unbounded, it is and indication that the data collected from the system are not informative enough and/or that $N$ is not big enough, invalidating Assumption \ref{as:info_data}, thus new data should be acquired. Moreover, the bound $\ulambp$ includes a factor $\alpha>1$ (see \eqref{eq:lambda_calc}) to account for the uncertainty due to the usage of a finite dataset. More details regarding the error bounds, the FPS, their theoretical properties, and the usage of scaling parameters can be found in \cite{lauricella2020set}.
\section{Multistep filtering with local bounds}\label{s:filtering}
Under the SM framework, it is possible to associate guaranteed accuracy bounds to multistep prediction models in the form of \eqref{eq:pred_formulation}, resorting to the bounds $\ulambp$, and the FPSs $\Theta_p$ defined in Section \ref{s:problem_statement}, see e.g., \cite{lauricella2020set}. 

The guaranteed accuracy bound $\taulkp$ is defined, for a $p$-steps ahead predictor with parameters $\theta_p$ and for a given regressor $\tilde{\varphi}_p(k-p)$, as
\begin{equation}\label{eq:tau_local_def}
\taulkp=\gamma \left(\max_{\theta \in \Theta_p} \left\vert \tilde{\varphi}_p(k-p)^T(\theta-\theta_p) \right\vert \right) + \ulambp,
\end{equation}
with $\gamma>1$. Here, $\gamma$ has the same role of the scaling parameter $\alpha$ in \eqref{eq:lambda_calc}, thus accounting for the uncertainty due to the usage of a finite dataset. The bound \eqref{eq:tau_local_def} is named ``local'', since it pertains to a specific regressor value $\tilde{\varphi}_p(k-p)$. Under Assumptions \ref{as:bounded_dist} and \ref{as:info_data}, it holds that, by construction
\begin{equation}\label{eq:z_under_bound_tau}
|z(k)-\tilde{\varphi}_p(k-p)^T\theta_p | \leq \taulkp,
\end{equation}
i.e., the system output is guaranteed to lie inside a set defined by an interval centered at the $p$-steps ahead prediction $\hat{z}(k|k-p)=\tilde{\varphi}_p(k-p)^T\theta_p$, with amplitude equal to the corresponding accuracy bound.\\
Resorting to this property, the novel data-driven filtering approach we propose is able to refine the true system output uncertainty range. This result is obtained by intersecting the uncertainty intervals of the outputs given by a group of $\bar{p}$ multistep models  \eqref{eq:pred_formulation}, all providing a prediction of the output $z(k)$, ranging from $\hat{z}(k|k-1)$ to $\hat{z}(k|k-\bar{p})$. Thus, the set $Z_{\bar{p}}(k)$ containing the true output at time $k$ is derived as:
\begin{equation}\label{eq:z_uncert_region_def_gen}
\begin{aligned}
z(k)\in Z_{\bar{p}}(k)=\Big\{ \bar{z}&: \, \tilde{\varphi}_p(k-p)^T\theta_p-\hat{\tau}_p(\tilde{\varphi}_p(k-p),\theta_p) \leq\bar{z} \leq \tilde{\varphi}_p(k-p)^T\theta_p+\hat{\tau}_p(\tilde{\varphi}_p(k-p),\theta_p), \\
&\forall \theta_p\in\Theta_p,\;\forall p=1,\hdots,\bar{p} \Big\}.
\end{aligned}
\end{equation}
The boundaries of $Z_{\bar{p}}(k)$ are:
\begin{equation}
\begin{aligned}
z^{max}(k)&=\max_{z\in Z_{\bar{p}}(k)} z = \min_{p=1,\hdots,\bar{p}} \zeta_p^{max}, \\
z^{min}(k)&=\min_{z\in Z_{\bar{p}}(k)} z = \max_{p=1,\hdots,\bar{p}} \zeta_p^{min},
\end{aligned}
\end{equation}
where
\begin{subequations}\label{eq:zeta_i_minmax}
\begin{equation}\label{eq:zeta_i_max}
\zeta_p^{max}=\min_{\theta_p\in\Theta_p}\Big(\tilde{\varphi}_p(k-p)^T\theta_p+\taulkp\Big),
\end{equation}
\begin{equation}\label{eq:zeta_i_min}
\zeta_p^{min}=\max_{\theta_p\in\Theta_p}\Big(\tilde{\varphi}_p(k-p)^T\theta_p-\taulkp\Big).
\end{equation}
\end{subequations}
Note that problems \eqref{eq:zeta_i_max} and \eqref{eq:zeta_i_min} can be recast as 2 Linear Programs (LP), which requires the previous solution of 2 LPs. Take for example the original optimization problem:
\[
\zeta_p^{max}=\min\limits_{\theta_p\in\Theta_p} \Big( \tilde{\varphi}_p(k-p)^T\theta_p + \gamma\max\limits_{\theta\in\Theta_p} \left\vert \tilde{\varphi}_p(k-p)^T (\theta-\theta_p) \right\vert+\ulambp \Big).
\]
This can be formulated as a minimization problem, where the absolute value is removed:
\begin{equation}\label{eq:zeta_LP_probl_interm}
\begin{aligned}
\zeta_p^{max}=&\min_{\tau,\theta_p\in\Theta_p} \tilde{\varphi}_p(k-p)^T\theta_p+ \gamma\tau+\ulambp \\
&\text{subject to}\\
&\max_{\theta\in\Theta_p} \Big( \tilde{\varphi}_p(k-p)^T(\theta-\theta_p) \Big)\leq \tau \\
&\max_{\theta\in\Theta_p} \Big( -\tilde{\varphi}_p(k-p)^T(\theta-\theta_p) \Big)\leq \tau
\end{aligned}
\end{equation}
Then, the maximization problems in \eqref{eq:zeta_LP_probl_interm} can be solved as independent LPs, since
\[\max\limits_{\theta\in\Theta_p}\tilde{\varphi}_p(k-p)^T(\theta-\theta_p)=\max\limits_{\theta\in\Theta_p}(\tilde{\varphi}_p(k-p)^T\theta) - \tilde{\varphi}_p(k-p)^T\theta_p,\] leading to:
\begin{equation}\label{eq:zeta_i_max_LP}
\begin{aligned}
\zeta_p^{max}=&\min_{\tau,\theta_p\in\Theta_p} \Big( \tilde{\varphi}_p(k-p)^T\theta_p+ \gamma\tau\Big)+\ulambp\\
&\text{subject to}\\
&c_{1_{p_k}}-\tilde{\varphi}_p(k-p)^T\theta_p\leq\tau\\
&c_{2_{p_k}}+\tilde{\varphi}_p(k-p)^T\theta_p\leq\tau\\
\end{aligned}
\end{equation}
where $c_{1_{p_k}}=\max\limits_{\theta\in\Theta_p}\tilde{\varphi}_p(k-p)^T\theta$, and $c_{2_{p_k}}=\max\limits_{\theta\in\Theta_p}-\tilde{\varphi}_p(k-p)^T\theta$. Similarly, problem \eqref{eq:zeta_i_min} becomes
\begin{equation}\label{eq:zeta_i_min_LP}
\begin{aligned}
\zeta_p^{min}=-&\min_{\tau,\theta_p\in\Theta_p} \Big(-\tilde{\varphi}_p(k-p)^T\theta_p+ \gamma\tau\Big)-\ulambp\\
&\text{subject to}\\
&c_{1_{p_k}}-\tilde{\varphi}_p(k-p)^T\theta_p\leq\tau\\
&c_{2_{p_k}}+\tilde{\varphi}_p(k-p)^T\theta_p\leq\tau\\
\end{aligned}
\end{equation}
The filtered value of $z(k)$ is then obtained using a central algorithm on the set $Z_{\bar{p}}(k)$, since the center of the local uncertainty interval attains the minimal achievable local accuracy bound, in a worst-case error sense, see \cite{traub1988information}:
\begin{equation}\label{eq:z_filter_local}
\hat{z}_{f_{\bar{p}}}(k)\doteq \frac{1}{2}\Big(z^{max}(k)+z^{min}(k) \Big).
\end{equation}
In summary, the computation of \eqref{eq:z_filter_local} implies the solution of $4\bar{p}$ LPs having $2N$ constraints at each sampling time, and requires the offline computation of the error bounds $\ulambp$, for $p\in[1,\hdots,\bar{p}]$, obtained solving $\bar{p}$ LPs. Since the FPSs are defined using only data pertaining to the identification dataset, and they are not updated online, it is possible to reduce the computational effort needed to solve \eqref{eq:zeta_i_max_LP} and \eqref{eq:zeta_i_min_LP} by performing an offline redundant constraints removal procedure on the FPSs, as proposed also in \cite{lauricella2020set}. This produces a significant decrease in the number of constraints, reducing the computational effort needed for the online solution of the LPs. Examples of the complexity of $\Theta_p$ in terms of number of inequalities, and of computational times for the local approach, are provided in Section \ref{s:sim_results}.

Procedure \ref{p:MS_loc_filtering} summarizes the described local filtering algorithm.
\begin{procedure}
	\caption{Multistep filtering with local bounds}
	\label{p:MS_loc_filtering}
	\begin{enumerate}
		\item Carry out the offline estimation of $\ulambp$, and use it to define the FPSs for the considered steps $p\in[1,\,\bar{p}]$.
		\item For every time sample $k$, compute the online solution of the LPs \eqref{eq:zeta_i_max_LP} and \eqref{eq:zeta_i_min_LP}, for $p\in[1,\,\bar{p}]$.
		\item Use the central algorithm \eqref{eq:z_filter_local} to obtain the filtered system output, and compute its guaranteed accuracy bound for the given time sample.
	\end{enumerate}
\end{procedure}

The following result pertaining to the guaranteed accuracy of the local approach holds.
\begin{lemma}\label{lm:z_filt_prop_new}
	Considering the output filtering algorithm \eqref{eq:z_filter_local}, the following properties hold:
	\begin{enumerate}
		\item The guaranteed local accuracy bound $\tau_{f_{\bar{p}}}(k)$ is
		\begin{equation}\label{eq:z_filt_bound_new}
		|\hat{z}_{f_{\bar{p}}}(k)- z(k)|\leq\tau_{f_{\bar{p}}}(k)=\frac{1}{2}\left\vert z^{max}(k)-z^{min}(k) \right\vert.
		\end{equation}
		\item The accuracy bound \eqref{eq:z_filt_bound_new} is the smallest worst-case error bound that can be achieved by any predicted output value.
		\item The accuracy bound \eqref{eq:z_filt_bound_new} is smaller than any single bound $\taulkp$, $\forall p\leq\bar{p}$.
	\end{enumerate}
\end{lemma}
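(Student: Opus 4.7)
The plan is to prove the three claims in order, exploiting the fact that $Z_{\bar{p}}(k)$ is by construction an interval on the real line containing the true output $z(k)$, so all three items reduce to elementary properties of the Chebyshev center of a real interval together with the min/max structure of $z^{max}(k)$ and $z^{min}(k)$.

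For item 1, I would start from \eqref{eq:z_under_bound_tau}, which states that for every $p \leq \bar{p}$ and every $\theta_p \in \Theta_p$ the true output $z(k)$ belongs to the interval $[\tilde{\varphi}_p(k-p)^T\theta_p - \taulkp,\, \tilde{\varphi}_p(k-p)^T\theta_p + \taulkp]$. Intersecting over $\theta_p \in \Theta_p$ and over $p = 1, \dots, \bar{p}$ and using the definitions \eqref{eq:zeta_i_max}, \eqref{eq:zeta_i_min}, this yields $z(k) \in [z^{min}(k),\, z^{max}(k)] = Z_{\bar{p}}(k)$. Since $\hat{z}_{f_{\bar{p}}}(k)$ is by \eqref{eq:z_filter_local} the midpoint of this interval, the distance from it to any point of the interval, and in particular to $z(k)$, is bounded by half the interval's length, which is exactly $\tau_{f_{\bar{p}}}(k)$.

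For item 2, I would invoke the minimax characterization of the Chebyshev center of an interval (this is the classical central-algorithm result from \cite{traub1988information} that the authors already cite). Under the sole information that $z(k) \in [z^{min}(k),\, z^{max}(k)]$, any candidate estimate $\hat{z}' \in \mathbb{R}$ incurs a worst-case error equal to $\sup_{z \in [z^{min},z^{max}]} |z - \hat{z}'| = \max\{|z^{max}-\hat{z}'|,\, |\hat{z}'-z^{min}|\}$, a function of $\hat{z}'$ that attains its unique minimum at the midpoint and whose minimum value equals $\frac{1}{2}|z^{max}-z^{min}|$. Any $\hat{z}' \neq \hat{z}_{f_{\bar{p}}}(k)$ therefore admits an admissible realization of $z(k)$ in $Z_{\bar{p}}(k)$ producing a strictly larger error, so no other estimator can improve on $\tau_{f_{\bar{p}}}(k)$ without violating the available worst-case information encoded in \eqref{eq:z_uncert_region_def_gen}.

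For item 3, the key observation is that $z^{max}(k) = \min_{p} \zeta_p^{max} \leq \zeta_p^{max}$ and $z^{min}(k) = \max_{p} \zeta_p^{min} \geq \zeta_p^{min}$ for every $p \leq \bar{p}$, so $|z^{max}(k) - z^{min}(k)| \leq \zeta_p^{max} - \zeta_p^{min}$. Next, for any fixed $\theta_p^\star \in \Theta_p$, the definitions \eqref{eq:zeta_i_max}, \eqref{eq:zeta_i_min} give $\zeta_p^{max} \leq \tilde{\varphi}_p(k-p)^T\theta_p^\star + \hat{\tau}_p(\tilde{\varphi}_p(k-p),\theta_p^\star)$ and $\zeta_p^{min} \geq \tilde{\varphi}_p(k-p)^T\theta_p^\star - \hat{\tau}_p(\tilde{\varphi}_p(k-p),\theta_p^\star)$, hence $\zeta_p^{max} - \zeta_p^{min} \leq 2\,\hat{\tau}_p(\tilde{\varphi}_p(k-p),\theta_p^\star)$. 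Combining the two yields $\tau_{f_{\bar{p}}}(k) \leq \hat{\tau}_p(\tilde{\varphi}_p(k-p),\theta_p^\star)$ for every $p \leq \bar{p}$ and every admissible $\theta_p^\star$.

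The only delicate step I anticipate is item 2: it requires articulating cleanly what ``any predicted output value'' means (a mapping from the available data to $\mathbb{R}$, evaluated in the worst case over the uncertainty set $Z_{\bar{p}}(k)$) so that the minimax argument is unambiguous and genuinely invokes the central-algorithm result rather than restating it. Items 1 and 3 are essentially set-inclusion and min/max bookkeeping and should be routine once the interval structure of $Z_{\bar{p}}(k)$ established in item 1 is available.
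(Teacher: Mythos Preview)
Your proposal is correct and is essentially a careful unpacking of exactly what the paper does: the paper's own proof consists of the single line ``Straightforward consequence of \eqref{eq:tau_local_def}--\eqref{eq:zeta_i_minmax}, \eqref{eq:z_filter_local},'' and your three items simply spell out those consequences (interval containment, Chebyshev-center optimality, and the min/max monotonicity). There is no substantive difference in approach.
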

\begin{proof}
	Straightforward consequence of \eqref{eq:tau_local_def}-\eqref{eq:zeta_i_minmax}, \eqref{eq:z_filter_local}. 
\end{proof}
\begin{remark}\label{rm:pbar_choice}
The variable $\bar{p}$, which represents the maximum prediction horizon length used in the filtering algorithm, is a tunable variable, whose choice corresponds to a trade-off between the tightness of the accuracy bound, and the resulting computational effort (which increases with $\bar{p}$).
\end{remark}
\section{Multistep filtering with global bounds}\label{s:filter_global_tau}
The multistep filtering approach proposed in Section \ref{s:filtering} is able to achieve tight accuracy bounds, since it is based on the guaranteed local error bound \eqref{eq:tau_local_def}. To do so, the filter requires the online solution of $4\bar{p}$ LPs for each time sample. Even though this can be easily done in numerous real world applications, thanks to the computational power available nowadays, there are still a number of applications where the sampling time is too short, or the available computational power is limited, and the filtering approach based on local bounds would be inapplicable online. In all these cases, it is possible to define an output filtering algorithm similar to \eqref{eq:z_filter_local}, but based on a global version of the guaranteed accuracy bound. The main advantage is that fixed prediction models and their bounds are computed offline during the identification phase, together with $\ulambp$ and the FPSs, and no optimization problem needs to be solved online. On the other side, the global version of bound $\tau_p(\theta_p)$ is designed to hold for any possible regressor value, corresponding to any possible system trajectory, thus it is more conservative than its local counterpart, which is designed specifically for a given regressor instance. Moreover, the multistep predictors are identified only once, so their performance could clearly not be better than that of the locally optimal prediction models described in Section \ref{s:filtering}. To select such fixed predictors, a suitable optimality criterion needs to be defined. \\
Multistep prediction models can be identified resorting to several criteria, among which least-squares estimation is probably the most famous \cite{soderstrom1989system}. Since our goal here is to obtain tight uncertainty intervals for the unknown system output, even in the global case, it seems reasonable to seek for the multistep predictors that minimize the corresponding $p$-steps ahead global error bound, thus obtaining the theoretical minimum global uncertainty interval for the corresponding horizon length $p$. For given predictor parameters $\theta_p$, the corresponding guaranteed global accuracy bound is estimated as:
\begin{equation*}\label{eq:tau_hat_global_def}
\bar{\hat{\tau}}_p(\theta_p)=\bar{\gamma} \left( \max_{\tphip\in \tilde{\mathscr{V}}_p^N} \max_{\theta \in \Theta_p} \left\vert \tphip^T \left(\theta - \theta_p \right) \right\vert \right) + \ulambp, \; \bar{\gamma}>1.
\end{equation*}
Under assumption \ref{as:info_data}, it can be shown that the global bound $\taugp$ is such that \cite{terzi2019learning,lauricella2020set}:
\[
|z(k)-\tilde{\varphi}_p(k-p)^T\theta_p | \leq \bar{\hat{\tau}}_p(\theta_p),
\]
i.e. it holds for any regressor belonging to the set of system trajectories covered by data (compare with \eqref{eq:z_under_bound_tau}). Then, we search inside the FPS $\Theta_p$ for the parameter vector $\hat{\theta}_p$ that minimizes the global bound $\bar{\hat{\tau}}_p(\hat{\theta}_p)$, i.e. \[
\hat{\theta}_p=\text{arg}\min\limits_{\theta_p\in\Theta_p}\bar{\hat{\tau}}_p(\theta_p).\]
This corresponds to
\begin{equation}\label{eq:multistep_theta_id}
\hat{\theta}_p = \text{arg} \min\limits_{\theta_p\in\Theta_p}\; \max\limits_{k=p+1,\hdots,N}\; \max\limits_{\theta\in\Theta_p} \left\vert \tilde{\varphi}_p(k-p)^T (\theta-\theta_p) \right\vert 
\end{equation}
Problem \eqref{eq:multistep_theta_id} can be reformulated as an LP, which requires the previous solution of $2(N-p)$ LPs, see \cite{lauricella2020set}, as
\begin{equation}\label{eq:multistep_theta_LP}
\begin{array}{c}
\hat{\theta}_p = \text{arg} \min\limits_{\zeta,\theta_p\in\Theta_p} \zeta \\
\text{subject to} \\
c_{k_p}-\check{\varphi}_p(k)^T\theta_p\leq \zeta, \; k\in [p+1,\,N]\cup[N+p+1,\,2N], 
\end{array}
\end{equation}
where $c_{k_p}\doteq \max\limits_{\theta\in\Theta_p} \check{\varphi}_p(k)^T \theta$, and
\begin{equation}\nonumber
\check{\varphi}_p(k)=\begin{cases}
\begin{array}{lcl}
\tilde{\varphi}_p(k-p) & \text{if} & k\leq N \\
-\tilde{\varphi}_p(k-p-N) & \text{if} & k> N.
\end{array}
\end{cases}
\end{equation}
The multistep model identified using \eqref{eq:multistep_theta_LP} is tailored for a specific horizon length $p$, for which it achieves optimal prediction properties, in terms of guaranteed global error bound amplitude. Then, we can intersect again the uncertainty intervals defined by the global accuracy bounds, obtaining the set $Z^g_{\bar{p}}(k)$ such that:
\begin{equation*}\label{eq:Z_global_set}
\begin{aligned}
z(k)\in Z^g_{\bar{p}}(k)&=\Big\{ \bar{z}:\, \tilde{\varphi}_p(k-p)^T\hthetap-\bar{\hat{\tau}}_p(\hat{\theta}_p)\leq\bar{z}\leq\tilde{\varphi}_p(k-p)^T\hthetap+\bar{\hat{\tau}}_p(\hat{\theta}_p), \; \forall p=1,\hdots,\bar{p} \Big\}.
\end{aligned}
\end{equation*}
The filtered output value at time $k$ is then given by the center of the set $Z^g_{\bar{p}}$:
\begin{equation}\label{eq:z_filter_global}
\hat{z}^g_{f_{\bar{p}}}(k)\doteq \frac{1}{2}\Big(z^{max}_g(k)+z^{min}_g(k) \Big),
\end{equation}
where
\begin{equation}\label{eq:zeta_i_global_minmax}
\begin{aligned}
z^{max}_g(k)&=\max_{z\in Z^g_{\bar{p}}(k)} z = \min_{p=1,\hdots,\bar{p}} \zeta_{g_p}^{max}, \\
z^{min}_g(k)&=\min_{z\in Z^g_{\bar{p}}(k)} z = \max_{p=1,\hdots,\bar{p}} \zeta_{g_p}^{min},
\end{aligned}
\end{equation}
and
\begin{equation*}
\begin{aligned}
\zeta_{g_p}^{max}&=\tilde{\varphi}_p(k-p)^T\hthetap+\taugph, \\[1\jot]
\zeta_{g_p}^{min}&=\tilde{\varphi}_p(k-p)^T\hthetap-\taugph.
\end{aligned}
\end{equation*}
The guaranteed accuracy bound of $\hat{z}^g_{f_{\bar{p}}}(k)$ is
\begin{equation}\label{eq:z_filt_glob_bound_new}
\tau^g_{f_{\bar{p}}}(k)=\frac{1}{2}\left\vert z_g^{max}(k)-z_g^{min}(k) \right\vert.
\end{equation}
The global filtering algorithm is sketched in Procedure \ref{p:MS_glob_filtering}.
\begin{procedure}
\caption{Multistep filtering with global bounds}
\label{p:MS_glob_filtering}
\begin{enumerate}
\item Perform the offline estimation of $\ulambp$, and define the FPSs, for $p\in[1,\,\bar{p}]$. Then, perform the offline identification of $\bar{p}$ independent $p$-steps ahead prediction models of the type of \eqref{eq:pred_formulation} by solving \eqref{eq:multistep_theta_LP} for each $p=1,\ldots,\bar{p}$.
\item At each time $k$, compute online the boundaries of the uncertainty interval $Z_{\bar{p}}^g(k)$ using \eqref{eq:zeta_i_global_minmax}.
\item Use the central algorithm \eqref{eq:z_filter_global} to obtain the filtered system output, and compute its guaranteed accuracy bound for the given time sample with \eqref{eq:z_filt_glob_bound_new}.
\end{enumerate}
\end{procedure}
\section{Simulation results}\label{s:sim_results}
The performance of the proposed filtering approaches have been tested on a numerical example, and compared to those of the well known Kalman filter, see e.g., \cite{kalman1961new,ljung1999system,soderstrom1989system} for details. We consider a sampled, single input single output linear time-invariant system in continuous time, whose transfer function is
\begin{equation*}\label{eq:num_sim_system}
G(s)=\frac{160}{(s+10)(s^2+0.8s+16)},
\end{equation*}
where $s$ is the Laplace variable. The input provided in the experiments is a three levels signal taking values in the set $\{-1, 0, 1\}$ randomly every 4 time instants. We collected 12000 input-output samples with a sampling time $T_s=0.1$.

\begin{figure}
	\centering
	\includegraphics[width=0.75\columnwidth]{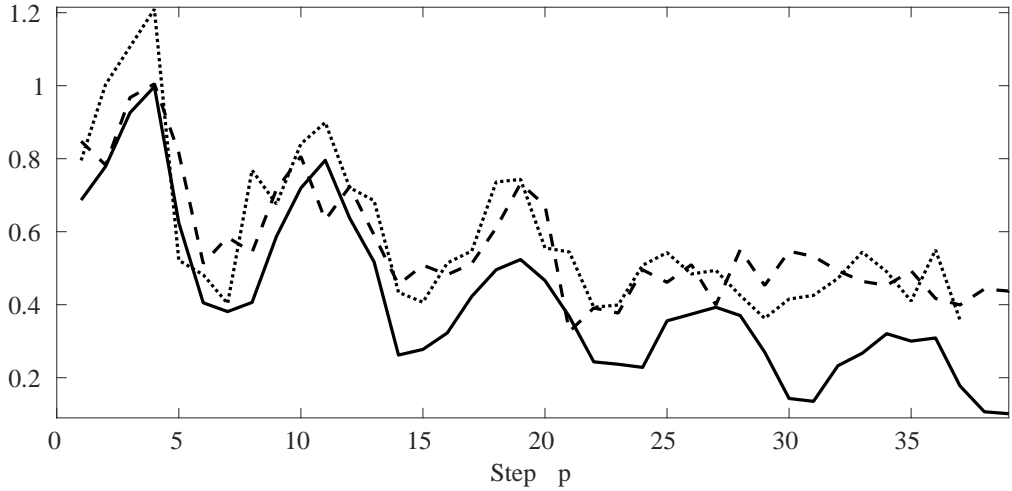}
	\caption{Multistep predictors guaranteed global error bound $\taugph$. Solid line: uniform measurement disturbance case; dashed line: Gaussian measurement disturbance case; dotted line: Gaussian process and measurement disturbances case.}
	\label{f:tau_p}
\end{figure}
\begin{table}[ht]
\caption{\label{t:Zfilt_uniform}Uniform measurement noise: Root Mean Square Error and accuracy bounds amplitude.}
\centering
\setlength\tabcolsep{7.5pt}
\begin{tabular}{c c c c c c c c c}
\toprule
\multicolumn{2}{c}{$\bar{p}$:} & $3$ & $5$ & $7$ & $15$ & $20$ & $35$ & 39 \\ 
\midrule
\multirow{2}{*}{local} & RMSE & 0.098 & 0.074 & 0.056 & 0.039 & 0.038 & 0.019 & 0.015 \\
 & $\max_k e(k)$ & 0.615 & 0.542 & 0.297 & 0.245 & 0.223 & 0.108 & 0.079 \\
\midrule
\multirow{2}{*}{global} & RMSE & 0.105 & 0.082 & 0.064 & 0.044 & 0.045 & 0.027 & 0.023 \\
 & $\max_k e(k)$ & 0.623 & 0.621 & 0.317 & 0.257 & 0.228 & 0.127 & 0.091 \\
\midrule
\multicolumn{3}{l}{Kalman filter - exact} & \multicolumn{3}{l}{RMSE = 0.001} & \multicolumn{3}{l}{$\max_k e(k)$ = 0.043}\\
\midrule
\multicolumn{3}{l}{Kalman filter - estimated} & \multicolumn{3}{l}{RMSE = 0.044} & \multicolumn{3}{l}{$\max_k e(k)$ = 0.228}\\
\midrule
\multirow{2}{*}{local} & \multirow{2}{*}{$\tau_{f_{\bar{p}}}$} avg & 0.344 & 0.288 & 0.184 & 0.117 & 0.110 & 0.054 & 0.040 \\
 & \phantom{$\tau_{f_{\bar{p}}}$} max & 0.658 & 0.565 & 0.327 & 0.252 & 0.242 & 0.119 & 0.087 \\
\midrule
\multirow{2}{*}{global} & \multirow{2}{*}{$\tau^g_{f_{\bar{p}}}$} avg & 0.633 & 0.563 & 0.371 & 0.252 & 0.251 & 0.128 & 0.095 \\
 & \phantom{$\tau^g_{f_{\bar{p}}}$} max & 0.687 & 0.624 & 0.381 & 0.262 & 0.262 & 0.135 & 0.101 \\
\midrule
\multicolumn{2}{c}{$\min\limits_{p\in[1,\bar{p}]} \taugph$} & 0.687 & 0.624 & 0.381 & 0.262 & 0.262 & 0.135 & 0.101 \\
\bottomrule
\end{tabular}
\vspace*{-0.2cm} 
\end{table}
We performed the numerical simulations considering three different disturbance scenarios: a) measurement disturbance signal given by a uniformly distributed random noise belonging to the interval $[-0.2, \; 0.2]$; b) measurement disturbance signal given by a Gaussian noise $d(t)\sim\mathcal{N}(0,\sigma_d^2)$, with $\sigma_d^2=0.01$; c) process and measurement disturbances given by the Gaussian noises $w(t)\sim\mathcal{N}(0,\sigma_w^2)$, and $d(t)\sim\mathcal{N}(0,\sigma_d^2)$ respectively, with $\sigma_w^2=0.001$, and $\sigma_d^2=0.01$. The measured output corresponds to $\tilde{y}(k)=z(k)+d(k)$, while, for scenario c), the system input is $u(k)=\tilde{u}(k)+w(k)$, where $\tilde{u}(k)$ is the measured input. The Kalman filter performance, used as a benchmark for comparison, are assessed under two different conditions: the first one is given by a Kalman filter based on a perfect knowledge of the system model, tuned to give the best filtering performance starting from unknown initial conditions, and using the noise affected measurements collected under scenarios a)-c); the second setup is given by a Kalman filter based on a model of the system identified from the noise affected measurements resorting to the simulation error method, see e.g. \cite{ljung1999system,tomita1992equation}, tuned to give the best filtering performance starting from unknown initial conditions. In both cases, we adopt the robust Kalman filter based on the Riccati equation approach.
\begin{figure*}[!htbp]
	\centering
		\begin{tabular}{cc}
				\hspace{-0.18cm}\begin{overpic}[width=0.48\columnwidth]{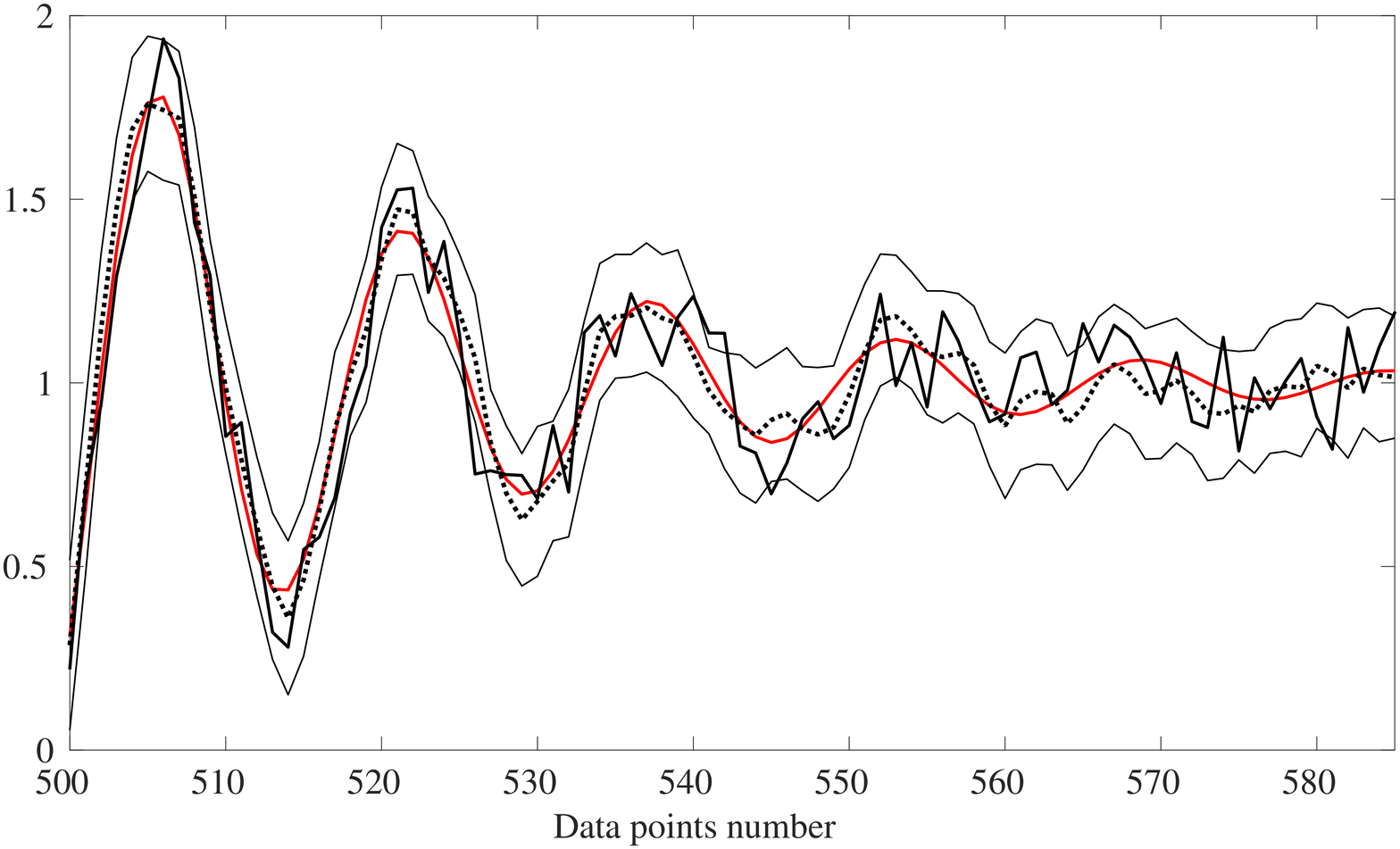}
					\put (47.8,61) {(a)}
				 \end{overpic} & \hspace{-0.2cm} \begin{overpic}[width=0.48\columnwidth]{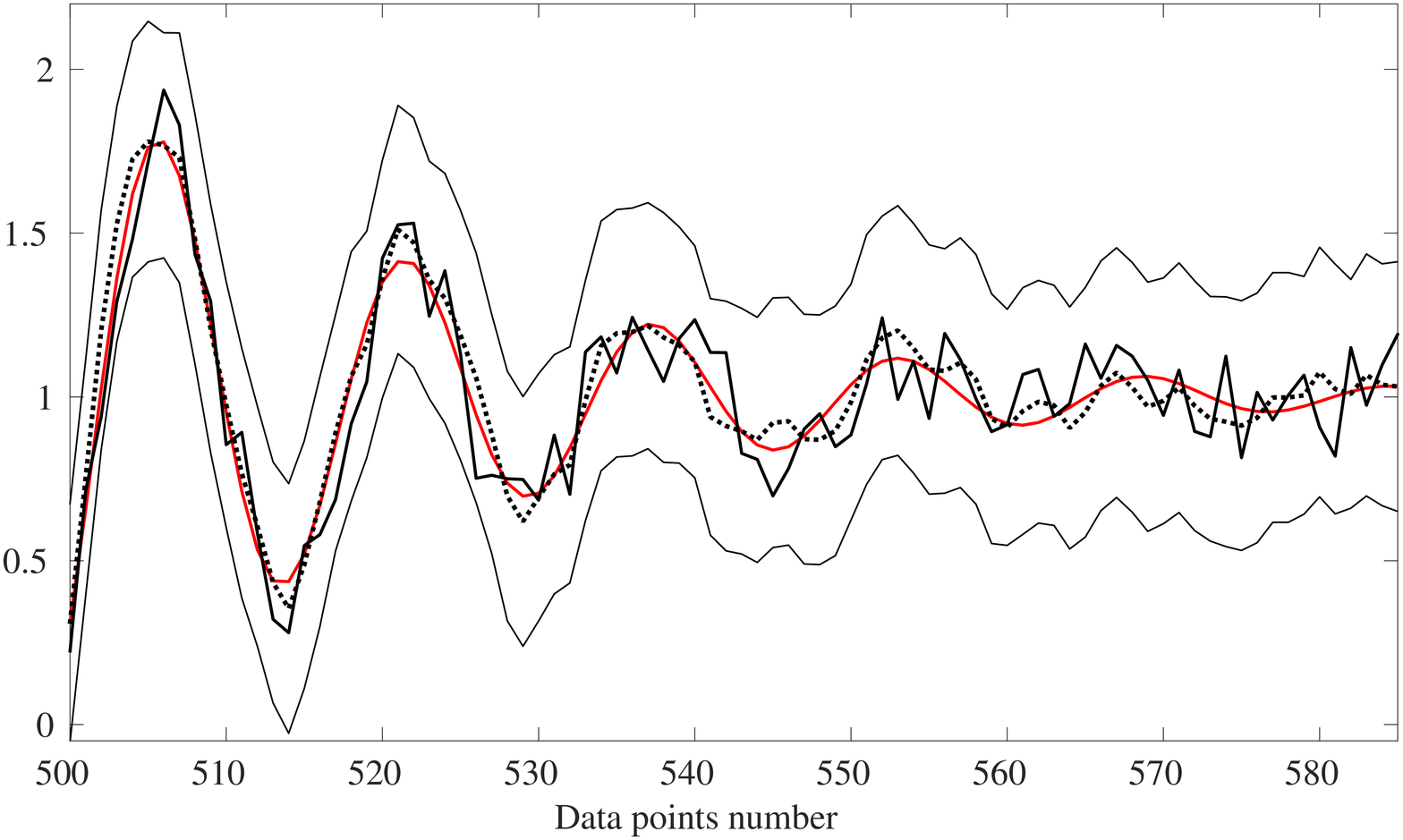}
					\put (47.8,61) {(b)}
				 \end{overpic}\\[4\jot]
     			 \hspace{-0.3cm} \begin{overpic}[width=0.48\columnwidth]{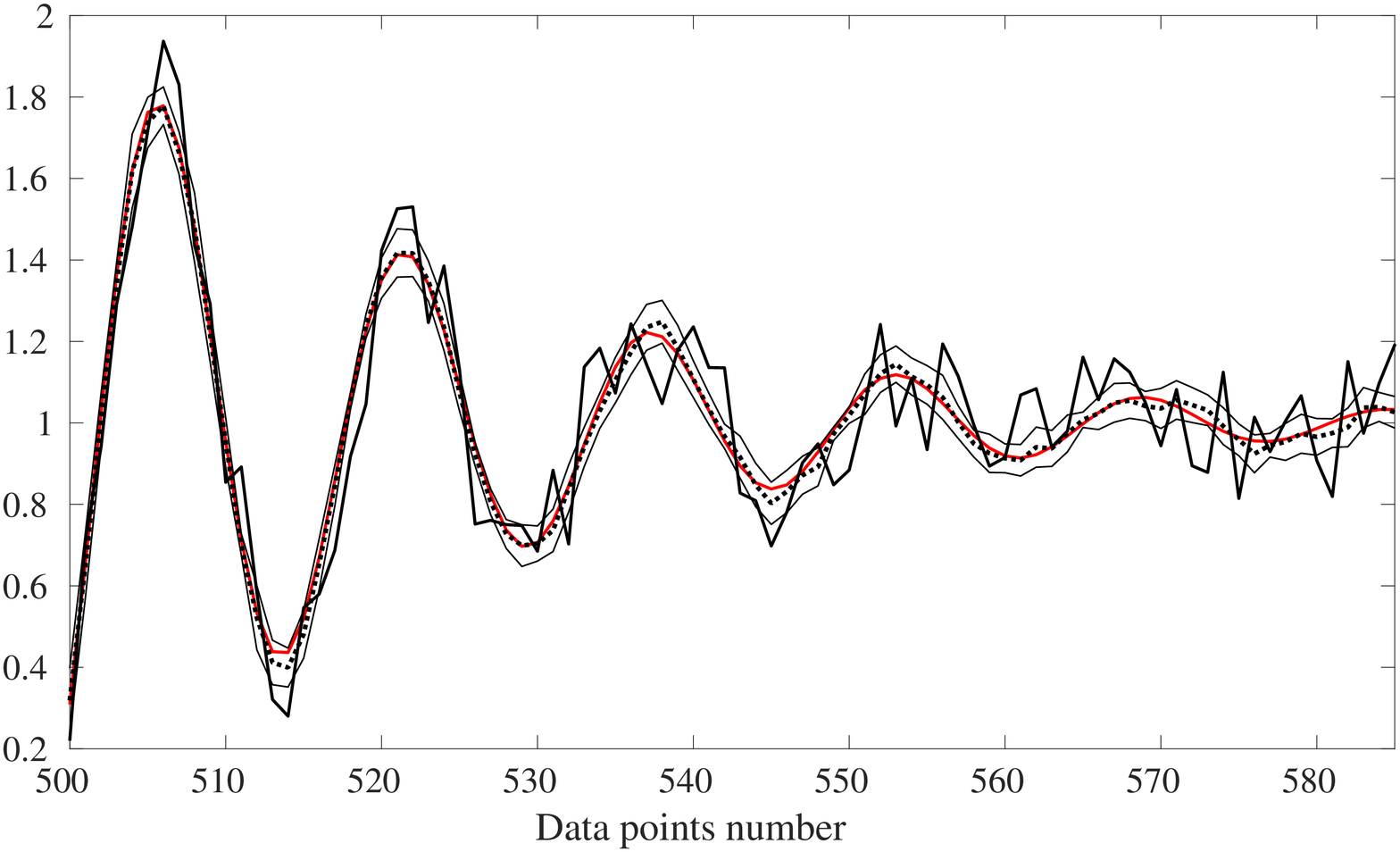}
					\put (47.8,61) {(c)}
				 \end{overpic} & \hspace{-0.2cm} \begin{overpic}[width=0.48\columnwidth]{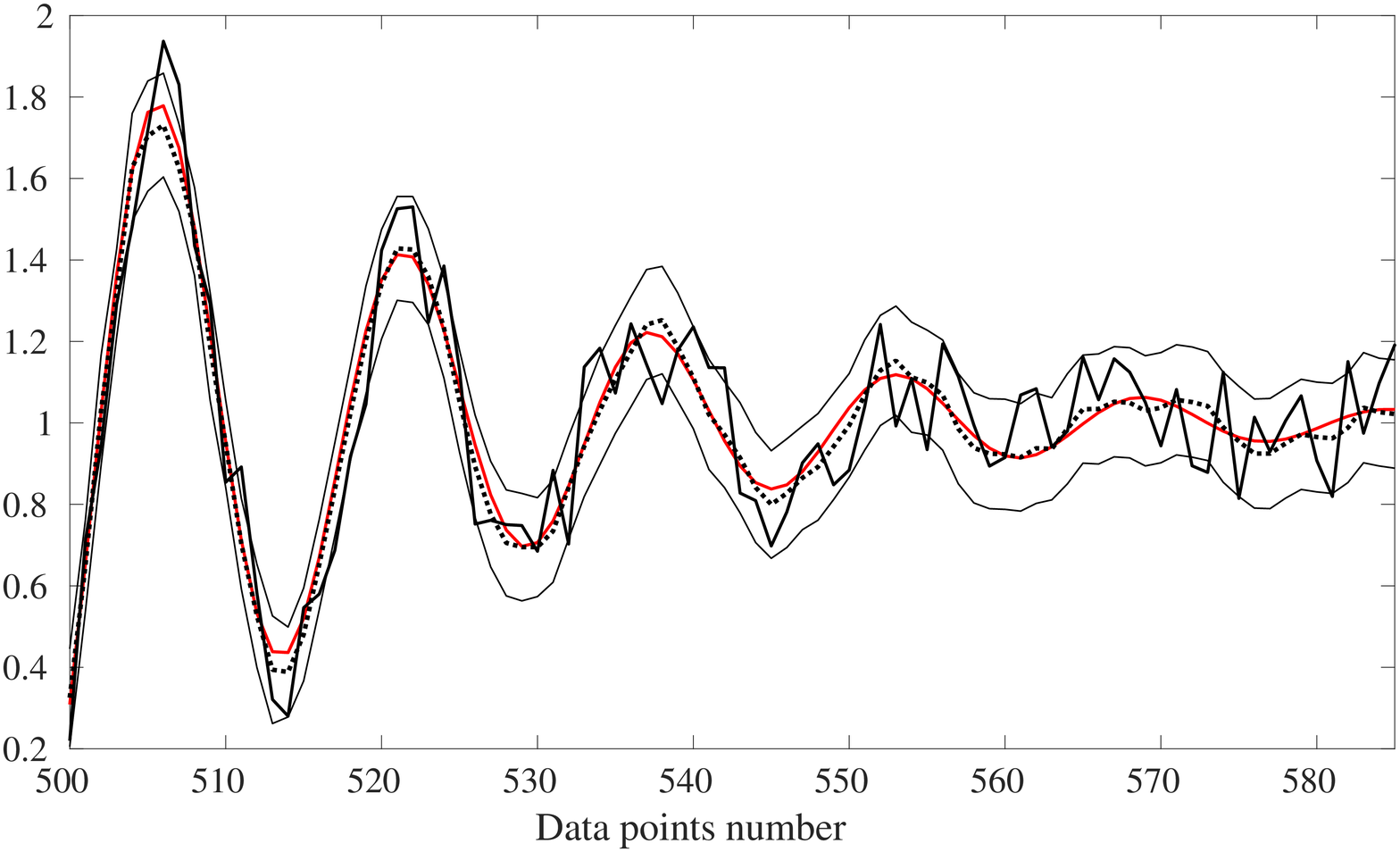}
					\put (47.8,61) {(d)}
				 \end{overpic}
		\end{tabular}
	\caption{Uniform measurement noise: filtered output $\hat{z}_{f_{\bar{p}}}$ with corresponding accuracy bounds. (a): local filtering with $\bar{p}=7$; (b): global filtering with $\bar{p}=7$; (c): local filtering with $\bar{p}=35$; (d): global filtering with $\bar{p}=35$. Black solid line: measured output $\tilde{y}$; red solid line: true system output $z$; black dotted line: filtered output $\hat{z}_{f_p}$; black thin lines: filtered output accuracy bounds.}
	\label{f:Zfilt_unif_loc_glob}
\end{figure*}
The first half of the available dataset is then used to perform the identification of the bound $\ulambp$, of the FPSs, of the multistep predictors, and of their guaranteed accuracy bounds $\taugph$, while the second half is used in validation to compute the filtered output using the local and the global filtering approaches, given by \eqref{eq:z_filter_local} and \eqref{eq:z_filter_global} respectively, and to obtain the corresponding uncertainty bounds \eqref{eq:z_filt_bound_new} and \eqref{eq:z_filt_glob_bound_new}. By applying the redundant constraints removal procedure, the number of FPS constraints is reduced from 12000 to an average of 331, for each $\Theta_p$, with $p\in[1,\,60]$, with a minimum of 48, and a maximum of 598 constraints. Fig.~\ref{f:tau_p} depicts the estimated global accuracy bounds $\taugph$ for the independent multistep prediction models, obtained with $\alpha=1.2$ and $\gamma=1.1$, for the three disturbance scenarios considered here.

Table~\ref{t:Zfilt_uniform} reports the guaranteed accuracy bounds, the Root Mean Squared Error (RMSE), defined as
\[
\text{RMSE}=\sqrt{\frac{\sum_{k=1}^N \Big( z(k)-\hat{z}_f(k) \Big)^2}{N}},
\]
and the maximum filtering error, given by $\max_k e(k)$, with $e(k)=z(k)-\hat{z}_f(k)$, for the Kalman filter, and for the local and global output filtering approaches, considering different values of the prediction horizon $\bar{p}$, for the case of uniformly distributed measurement noise. The accuracy bounds of the filtering approaches are then compared to that of the $p$-steps ahead model achieving the lowest $\taugph$ for each considered prediction horizon length. Tables~\ref{t:Zfilt_Gauss} and \ref{t:Zfilt_Gauss_IO} show the same performance comparison for the case of Gaussian measurement noise, and for the case of Gaussian process and measurement noises, respectively.
\begin{table}[ht]
\caption{\label{t:computation_time}Computational time comparison for local and global filtering approaches.}
\centering
\setlength\tabcolsep{10pt}
\vspace*{-0.2cm} 
\begin{tabular}{c c c c c}
\toprule
\multicolumn{2}{c}{$\bar{p}:$} & 3 & 8 & 15\\[-0.5\jot]
\midrule
\multirow{3}{*}{local} & min & 0.008 s & 0.022 s & 0.041 s \\[-0.3\jot]
& max & 0.033 s & 0.087 s & 0.137 s\\[-0.3\jot]
& avg & 0.009 s & 0.033 s & 0.046 s\\[-0.3\jot]
\midrule
\multirow{3}{*}{global} & min & 2.0$\cdot 10^{-5}$ s & 5.3$\cdot 10^{-5}$ s & 9.9$\cdot 10^{-5}$ s\\[-0.3\jot]
& max & 2.2$\cdot 10^{-4}$ s & 2.4$\cdot 10^{-4}$ s & 6.2$\cdot 10^{-4}$ s\\[-0.3\jot]
& avg & 2.8$\cdot 10^{-5}$ s & 8.7$\cdot 10^{-5}$ s & 1.2$\cdot 10^{-4}$ s\\[-0.3\jot]
\bottomrule
\end{tabular} 
\end{table}

These results show that the local filtering approach achieves the lowest filtering error, both in terms of RMSE and maximum error, and the lowest accuracy bounds, both on average and in worst-case, with respect to the global filtering approach, and to the usage of a single $p$-steps ahead prediction model. At the same time, the local filtering approach achieves good performances in terms of RMSE also when compared to the Kalman filter. It is worth to point out that, for $\bar{p}\geq 15$, the local filtering algorithm outperforms the Kalman filter based on an estimated system model, both in terms of RMSE and of maximum filtering error, for all the disturbance scenarios considered here. Moreover, for the case of process and measurement Gaussian disturbances, the local filtering approach is able to achieve a RMSE comparable to that of the Kalman filter based on the exact system model, and much smaller maximum error values. Clearly, in all the cases where a process disturbance signal is not present, the Kalman filter based on the exact system model achieves almost zero filtering error, since the input and the system model are perfectly known, and the only source of uncertainty is given by the unknown initial conditions.

The global filtering approach achieves intermediate results, closer to that obtained using a single multistep prediction model, with the advantage of a considerable reduction of the necessary online computational effort over the local approach. Table~\ref{t:computation_time} shows a brief comparison of the computational time required by the local and global approaches to provide the filtered version of one output sample. The simulations were performed on a Laptop equipped with Intel i7 dual-core processor with 2.4~GHz clock speed and 8~GB of RAM, using MatLab R2018b \verb|linprog| solver, based on the Dual Simplex algorithm. Fig.~\ref{f:Zfilt_unif_loc_glob} depicts the filtered output obtained using the local and global multistep filtering approaches, together with their accuracy bounds, for different prediction horizon lengths, and compare them to the real system output $z$, and to its measurement $\tilde{y}$. Fig.~\ref{f:MS_bound_on_z} depicts an example of the true output uncertainty intervals, obtained as intersection of the multistep predictors accuracy regions, for the case of uniform measurement noise. Finally, Fig.~\ref{f:Gauss_IO_Zfilt_vs_KF} reports the comparison between the filtered system output obtained using the local filtering approach, and that obtained from the Kalman filter based on the exact model of the system, and from the Kalman filter based on an identified system model, for the case of Gaussian process and measurement noise. 
\begin{figure}[!ht]
	\centering
	\includegraphics[width=0.75\columnwidth]{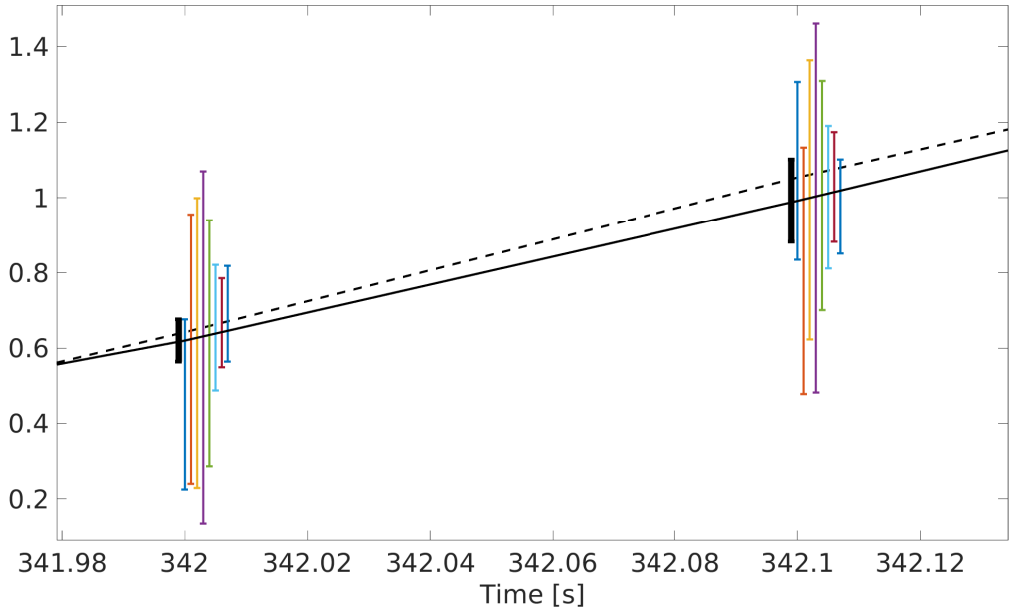}
	\caption{Uniform measurement noise: example of multistep predictors local accuracy intervals $[\zeta_p^{min}\;\zeta_p^{max}]$ and their intersection. Solid line: local filtered output $\hat{z}_{f_{\bar{p}}}$; dashed line: true system output $z$; colored bars: multistep predictors local accuracy intervals $[\zeta_p^{min},\;\zeta_p^{max}]$ for $\bar{p}=8$; thick black bars: resulting true output uncertainty set $Z_{\bar{p}}(k)$. \label{f:MS_bound_on_z}}
\end{figure}
\begin{table}[ht]
\caption{\label{t:Zfilt_Gauss}Gaussian measurement noise: Root Mean Square Error and accuracy bounds amplitude.}
\centering
\setlength\tabcolsep{7.5pt}
\begin{tabular}{c c c c c c c c c}
\toprule
\multicolumn{2}{c}{$\bar{p}$:} & $3$ & $5$ & $7$ & $15$ & $20$ & $35$ & 39 \\ 
\midrule
\multirow{2}{*}{local} & RMSE & 0.096 & 0.075 & 0.058 & 0.045 & 0.044 & 0.047 & 0.045 \\
 & $\max_k e(k)$ & 0.773 & 0.659 & 0.498 & 0.423 & 0.422 & 0.313 & 0.313 \\
\midrule
\multirow{2}{*}{global} & RMSE & 0.112 & 0.094 & 0.071 & 0.048 & 0.045 & 0.047 & 0.045 \\
 & $\max_k e(k)$ & 0.781 & 0.774 & 0.501 & 0.427 & 0.426 & 0.325 & 0.325 \\
\midrule
\multicolumn{3}{l}{Kalman filter - exact} & \multicolumn{3}{l}{RMSE = 0.002} & \multicolumn{3}{l}{$\max_k e(k)$ = 0.093}\\
\midrule
\multicolumn{3}{l}{Kalman filter - estimated} & \multicolumn{3}{l}{RMSE = 0.053} & \multicolumn{3}{l}{$\max_k e(k)$ = 1.089}\\
\midrule
\multirow{2}{*}{local} & \multirow{2}{*}{$\tau_{f_{\bar{p}}}$} avg & 0.355 & 0.317 & 0.212 & 0.164 & 0.156 & 0.110 & 0.107 \\
 & \phantom{$\tau_{f_{\bar{p}}}$} max & 0.744 & 0.686 & 0.470 & 0.432 & 0.422 & 0.295 & 0.295 \\
\midrule
\multirow{2}{*}{global} & \multirow{2}{*}{$\tau^g_{f_{\bar{p}}}$} avg & 0.732 & 0.704 & 0.509 & 0.433 & 0.421 & 0.309 & 0.307 \\
 & \phantom{$\tau^g_{f_{\bar{p}}}$} max & 0.783 & 0.783 & 0.513 & 0.455 & 0.455 & 0.326 & 0.326 \\
\midrule
\multicolumn{2}{c}{$\min\limits_{p\in[1,\bar{p}]} \taugph$} & 0.783 & 0.783 & 0.513 & 0.455 & 0.455 & 0.326 & 0.326 \\
\bottomrule
\end{tabular} 
\end{table}
\begin{table}[ht]
\caption{\label{t:Zfilt_Gauss_IO}Gaussian process and measurement noise: Root Mean Square Error and accuracy bounds amplitude.}
\centering
\setlength\tabcolsep{7.5pt}
\begin{tabular}{c c c c c c c c}
\toprule
\multicolumn{2}{c}{$\bar{p}$:} & $3$ & $5$ & $7$ & $15$ & $25$ & $35$ \\ 
\midrule
\multirow{2}{*}{local} & RMSE & 0.095 & 0.085 & 0.058 & 0.043 & 0.037 & 0.037 \\
 & $\max_k e(k)$ & 0.379 & 0.337 & 0.231 & 0.206 & 0.203 & 0.165 \\
\midrule
\multirow{2}{*}{global} & RMSE & 0.108 & 0.100 & 0.066 & 0.042 & 0.039 & 0.037 \\
 & $\max_k e(k)$ & 0.395 & 0.463 & 0.360 & 0.326 & 0.308 & 0.185 \\
\midrule
\multicolumn{3}{l}{Kalman filter - exact} & \multicolumn{2}{l}{RMSE = 0.030} & \multicolumn{3}{l}{$\max_k e(k)$ = 0.884}\\
\midrule
\multicolumn{3}{l}{Kalman filter - estimated} & \multicolumn{2}{l}{RMSE = 0.043} & \multicolumn{3}{l}{$\max_k e(k)$ = 0.933}\\
\midrule
\multirow{2}{*}{local} & \multirow{2}{*}{$\tau_{f_{\bar{p}}}$} avg & 0.392 & 0.253 & 0.165 & 0.135 & 0.111 & 0.097 \\
 & \phantom{$\tau_{f_{\bar{p}}}$} max & 0.730 & 0.514 & 0.336 & 0.328 & 0.298 & 0.278 \\
\midrule
\multirow{2}{*}{global} & \multirow{2}{*}{$\tau^g_{f_{\bar{p}}}$} avg & 0.778 & 0.519 & 0.397 & 0.367 & 0.349 & 0.320 \\
 & \phantom{$\tau^g_{f_{\bar{p}}}$} max & 0.795 & 0.522 & 0.403 & 0.403 & 0.393 & 0.363 \\
\midrule
\multicolumn{2}{c}{$\min\limits_{p\in[1,\bar{p}]} \taugph$} & 0.795 & 0.522 & 0.403 & 0.403 & 0.393 & 0.363 \\
\bottomrule
\end{tabular} 
\end{table}
\begin{figure*}[!ht]
	\centering
	\begin{tabular}{c c}
	(a) & (b)\\
	\includegraphics[width=0.48\columnwidth]{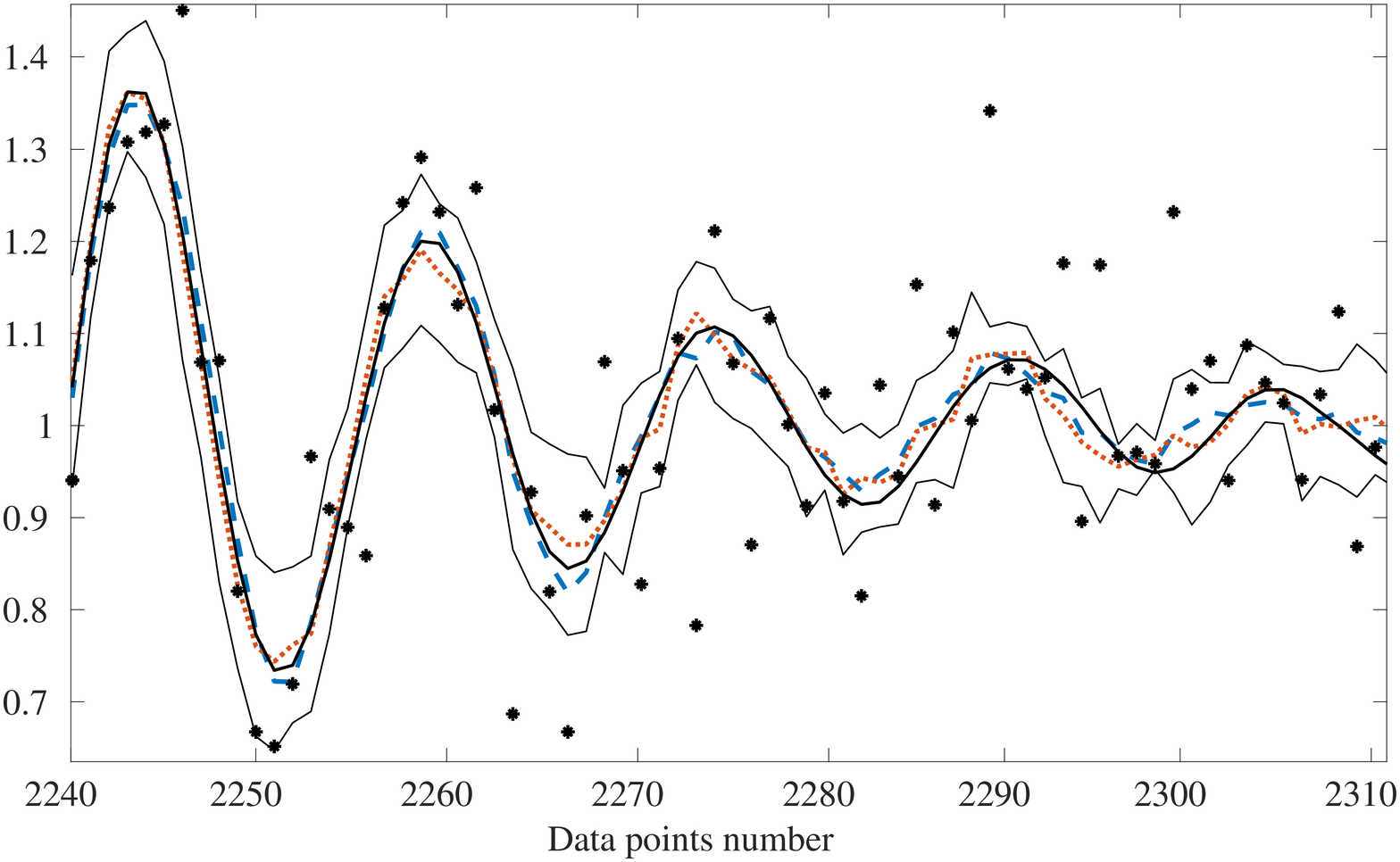} & \includegraphics[width=0.48\columnwidth]{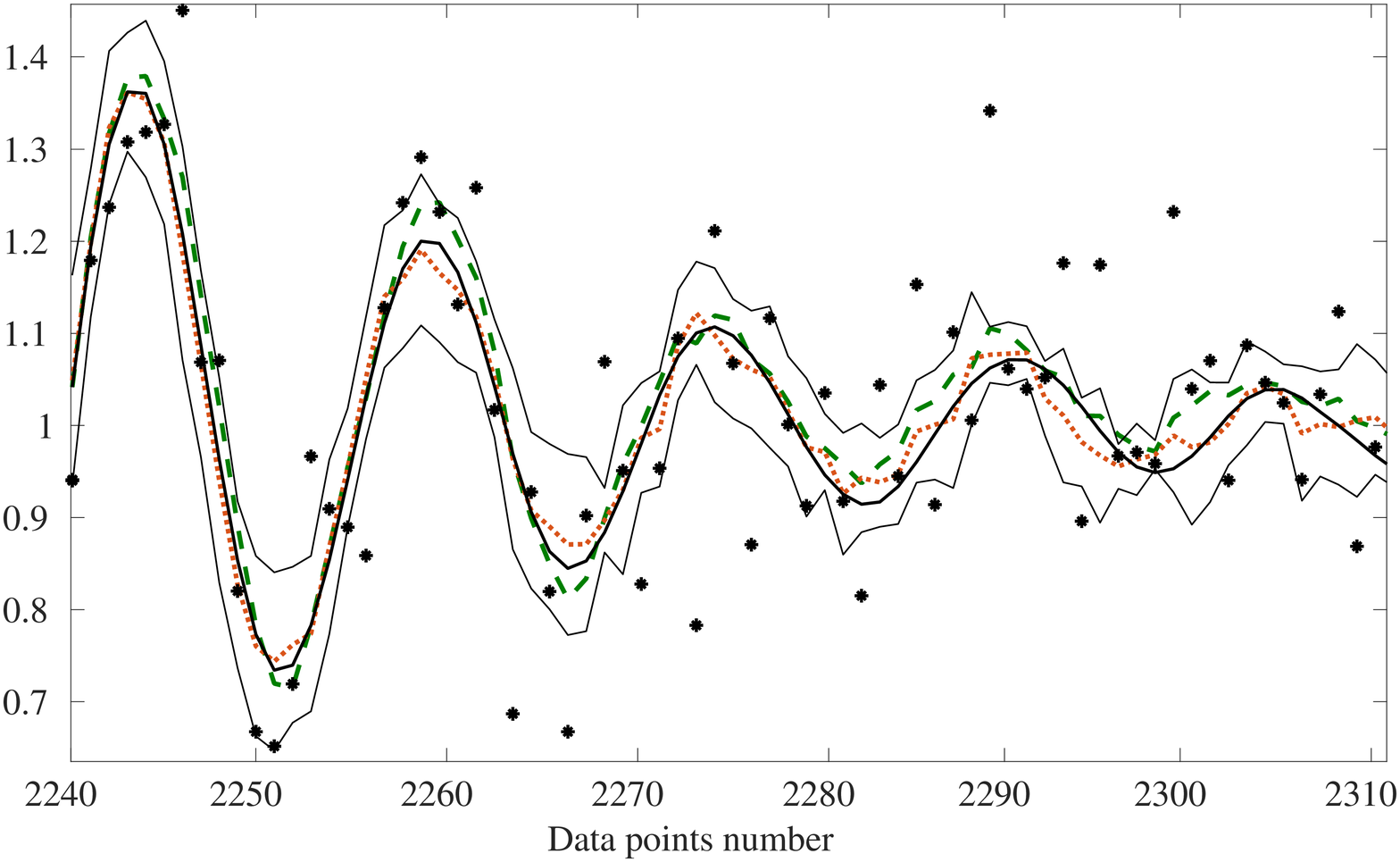}
	\end{tabular}
	\caption{Gaussian process and measurement noise: comparison of local filtering algorithm and Kalman filter. (a): local filtering approach and Kalman filter with exact system model; (b): local filtering approach and Kalman filter with estimated system model. Black solid line: true system output $z$; dots: measured output $\tilde{y}$; red dotted line: local filtered output $\hat{z}_{f_{\bar{p}}}$, with $\bar{p}=35$; black thin lines: local filtered output accuracy bounds; blue dashed line: Kalman filter with exact system model; green dashed line: Kalman filter with estimated system model.\label{f:Gauss_IO_Zfilt_vs_KF}}
\end{figure*}
\section{Conclusions}\label{s:conclusions}
The data-driven direct filtering approach presented in this paper allows one to address the problem of filtering the output of linear time-invariant systems subject to unknown-but-bounded uncertainties without deriving a model of the system. The proposed filtering algorithm is able to achieve good filtering accuracy, quantified in terms of average filtering error, and of guaranteed accuracy bounds, and it allows one to compute tight guaranteed uncertainty intervals for the  true system output. The proposed method relies on the intersection of the uncertainty regions of different SM multistep predictors, which are used together to provide a filtered version of the system output, while refining the output uncertainty intervals thanks to the combined use of different prediction horizon lengths. The local filtering algorithm, and its accuracy bounds, are then obtained by means of linear programming, which makes them suitable for online implementation in real world applications. A more conservative global filtering approach was presented to lower the online computational effort, moving the solution of the optimization problems to an offline phase. Numerical simulations illustrate the performance of the proposed local filtering approach, which is able to outperform a Kalman filter based on an estimated system model, and to score similar performance to that of a Kalman filter based on the exact system model, while retaining the ability to provide optimal guaranteed accuracy bounds for the filtered output.
\bibliographystyle{IEEEtran}

\vfill
\end{document}